\newcommand{\N}{\mathbb{N}}
 \newcommand{\Q}{\mathbb{Q}}
 \newcommand{\Z}{\mathbb{Z}}
 \newcommand{\R}{\mathbb{R}}
 \newcommand{\Sym}{\mathfrak{S}}
\newdimen\shadedBaseline\shadedBaseline=-4mm
\newcommand\ShadedTableau[2][\relax]{%
  \begin{tikzpicture}[scale=0.4,draw/.append style={thick,black},baseline=\shadedBaseline]
    \ifx\relax#1\relax%
    \else 
      \foreach\bx in {#1} { \filldraw[blue!20]\bx+(-.5,-.5)rectangle++(.5,.5); }
    \fi
    \tableauRow=0
    \foreach \Row in {#2} {
       \tableauCol=1
       \foreach\k in \Row {
          \draw(\the\tableauCol,\the\tableauRow)+(-.5,-.5)rectangle++(.5,.5);
          \draw(\the\tableauCol,\the\tableauRow)node{\k};
          \global\advance\tableauCol by 1
       }
       \global\advance\tableauRow by -1
    }
  \end{tikzpicture}%
}
\newcommand\diag[3][\relax]{%
  \begin{tikzpicture}[scale=0.4,draw/.append style={thick,black},baseline=\shadedBaseline]
    \ifx\relax#1\relax%
    \else 
      \foreach\bx in {#1} { \filldraw[blue!20,dashed]\bx+(-.5,-.5)rectangle++(.5,.5); }
    \fi
    \tableauRow=0
    \foreach \Row in {#2} {
       \tableauCol=1
       \foreach\k in \Row {
          \draw(\the\tableauCol,\the\tableauRow)+(-.5,-.5)rectangle++(.5,.5);
          \draw(\the\tableauCol,\the\tableauRow)node{\k};
          \global\advance\tableauCol by 1
       }
       \global\advance\tableauRow by -1
    }
    \foreach \x in {#3} {
      \draw[red,dashed](\x+3,-2-\x)+(-.5,-.5)rectangle++(.5,.5);
    }
  \end{tikzpicture}%
}
\newcommand\frob[7][\relax]{%
  \begin{tikzpicture}[scale=0.4,draw/.append style={black},baseline=\shadedBaseline]
    \ifx\relax#1\relax%
    \else 
      \foreach\bx in {#1} { \filldraw[gray!20,dashed]\bx+(-.5,-.5)rectangle++(.5,.5); }
    \fi
    \tableauRow=0
    \foreach \Row in {#2} {
       \tableauCol=1
       \foreach\k in \Row {
          \draw(\the\tableauCol,\the\tableauRow)+(-.5,-.5)rectangle++(.5,.5);
          \draw(\the\tableauCol,\the\tableauRow)node{\k};
          \global\advance\tableauCol by 1
       }
       \global\advance\tableauRow by -1
    }
        \foreach \x in {#3} {
      \draw[gray!80,dashed](\x+3,-2-\x)+(-.5,-.5)rectangle++(.5,.5);
    }
    \foreach\bx in {#4} {
        \draw[thick]\bx+(-.3,0)rectangle++(.3,.0);
    }
    \foreach\bx in {#5} {
        \draw[thick]\bx+(0,.3)rectangle++(0,-.3);
    }
    \foreach\bx in {#6} {
        \draw[thick,densely dotted]\bx+(-.3,0)--++(.3,.0);
    }
    \foreach\bx in {#7} {
        \draw[thick,densely dotted]\bx+(0,.3)--++(0,-.3);
    }

  \end{tikzpicture}%
}
\newtheorem{theorem}{Theorem}[section] 
\newtheorem{lemma}[theorem]{Lemma}     
\newtheorem{example}[theorem]{Example}
\theoremstyle{definition}
\newtheorem{remark}[theorem]{Remark}
\title[]
{A crank-based approach to the theory of $3$-core partitions}
\author{Olivier Brunat}
\address{Universit\'e Paris-Diderot Paris 7\\ Institut de math\'ematiques de
         Jussieu -- Paris Rive Gauche\\ UFR de math\'e\-matiques\\ Case
7012\\ 75205 Paris Cedex 13\\
         France.}
\email{olivier.brunat@imj-prg.fr}
\author{Rishi Nath}
\address{York College, City University of New York, 
94--20 Guy R. Brewer Blvd. \\
Jamaica, NY 11435\\
USA
}
\email{rnath@york.cuny.edu}
\subjclass[2010]{05A17,\, 11P83}
\begin{document} 

\begin{abstract} 
    This note is concerned with the set of integral solutions of the
equation $x^2+3y^2=12n+4$, where $n$ is a positive integer. We will
describe a parametrization of this set using the $3$-core partitions of
$n$. In particular we construct a crank using the action of a suitable
subgroup of the isometric group of the plane that we connect with the unit
group of the ring of Eisenstein integers. We also show that the process
goes in the reverse direction: from the solutions of the equation and the
crank, we can describe the $3$-core partitions of $n$. As a consequence we
describe an explicit bijection between $3$-core partitions and ideals of
the ring of Eisenstein integers, explaining a result of G. Han and K. Ono
obtained using modular forms.
\end{abstract} 
\maketitle

\section{Introduction} 
\label{sec:intro}
    
    Let $t$ be a positive integer. The set of $t$-core partitions is a
subset of the set of integer partitions which is important in the
representation theory of the symmetric groups $\mathfrak S_n$. Indeed, the
famous Nakayama conjecture (since proven) says that, when $t$ is a prime
number, $t$-core partitions label the $t$-blocks of the symmetric groups.
In particular, defect zero characters of the symmetric group are labeled
by $t$-core partitions of $n$. On the other hand, $t$-core partitions also
appear crucially in number theory. For example, F. Garvan, D. Kim and D.
Stanton give in~\cite{GarvanKimStanton} a very elegant proof of
Ramanujan's congruences by constructing cranks using $t$-core partitions.
Following Dyson~\cite{dyson}, a crank over a finite set $A$ is a map
$\mathfrak c:A \rightarrow B$ to another set $B$, such that all fibers of
$\mathfrak c$ have the same cardinality. In this paper, we consider the
equation over integers
\begin{equation}
\label{eq:equationmain}
x^2+3y^2=k,
\end{equation}
where $k$ is a nonnegative integer. Denote by $\mathcal U_k$ the set of
tuples $(x,y)\in\Z^2$ that are solution of~(\ref{eq:equationmain}), and
set $u(k) = |\mathcal U_k|$. Equation~(\ref{eq:equationmain}) was first
considered by Fermat who conjectured in 1654~\cite[p.\,8]{cox} that if $k$
is a prime number congruent to $1$ modulo $3$, then $u(k) \neq 0$. This
conjecture was proved by Euler~\cite[p.\,12]{cox} in 1759. 

    In this note, we consider Equation~(\ref{eq:equationmain}) for $k =
12n+4$, where $n$ is a nonnegative integer. We will construct a crank
$\mathfrak c:\mathcal U_{12n+4}\rightarrow (\Z/3\Z)^2$ connected with the
set $\mathcal E_n$ of $3$-core partitions of $n$. In particular, we obtain
a parametrization of the elements of $\mathcal U_{12n+4}$ that depends on
$\mathcal E_n$. Moreover, this process goes in the reverse direction. From
the set of solutions of~(\ref{eq:equationmain}), we can recover $\mathcal
E_n$ as a fiber of $\mathfrak c$. 
    In fact, such a connection is not surprising. Indeed, in 2014, N.D.
Baruah and K. Nath showed in~\cite[Theorem 3.1]{BaruahNath} using the
theory of modular forms and Ramanujan's theta function identities that
$$u(12n+4)=6a_3(n),$$
where $a_3(n)=|\mathcal E_n|$. 
Note that as a consequence of our work, we obtain a new proof of this
equality.

    Recall that a partition $\lambda$ of $n$ is a non-increasing sequence
of positive integers summing to $n$. The integer $n$ is then called the
size of $\lambda$, also denoted by $|\lambda|$. Let $\mathcal P$ be the
set of partitions of integers, and $\mathcal P_n$ be the one of partitions
of $n$. To any $\lambda = (\lambda_1, \ldots, \lambda_r)\in\mathcal P$, we
associate its Young diagram. This is a collection of boxes arranged in
left- and top-aligned rows so that the number of boxes in its $j$th row is
$\lambda_j$. 

    The hook associated to a box $b$ of the Young diagram of $\lambda$
is the set of the boxes of the diagram to the right and below $b$. The
number of boxes appearing in a hook is called its hooklength. 
    We recall that a $3$-core partition is a partition with no
hooklength equal to $3$. We denote by $\mathcal E$ the set of $3$-core
partitions. Furthermore, as already mentioned, we write $\mathcal E_n$ for
the set of $\lambda\in\mathcal E$ of size $n$.
Recall that A. Granville and K. Ono showed in~\cite{GranvilleOno} that
every natural number has a $t$-core partition whenever $t\geq 4$. In the
same paper, the authors proved~\cite[\S3]{GranvilleOno} that this is not
the case for $t=3$; see also Remark~\ref{rk:t3}.

    The construction in this paper is based on the characteristic vectors
approach for the $3$-core partitions. In~\cite{GarvanKimStanton}, Garvan,
Kim and Stanton proved that the set $\mathcal E$ is in bijection with the
set 
\begin{equation}
\label{eq:enscar3}
\mathcal C=\{(c_0,c_1,c_2)\in\Z^3\mid c_0+c_1+c_2=0\}.
\end{equation}


    Recently, the two authors proposed a new approach based on the
Frobenius symbol of a partition~\cite{frobenius} to interpret this
bijection using abaci theory. Note that abaci methods were previously used
in~\cite{OnoSze} by K. Ono and L. Sze to connect $4$-core partitions and
class group structure of the integer rings of the imaginary quadratic
fields $\Q(\sqrt{-8n-5})$. Such links were also established in many
cases; see for example~\cite{levent},~\cite{BringmannKaneMales}
and~\cite{OnoRaji}.

    Now, we present more precisely our strategy. For any nonnegative
integer $n$, we define a crank $\mathfrak c:\mathcal U_{12n+4} \rightarrow
(\Z/3\Z)^2$. To prove that $\mathfrak c$ is a crank, we will describe a
natural action of $\Z/6\Z$ over $\mathcal U_{12n+4}$ and show that it
permutes the fibers of $\mathfrak c$; see Theorem~\ref{thm:action}. On the
other hand, we will construct an injective map between $\mathcal E_n$ and
$\mathcal U_{12n+4}$ (see Lemma~\ref{lemma:injection}), and we show that
its image is a fiber of $\mathfrak c$. In particular, this is a set of
representatives for the $\Z/6\Z$-action. We then obtain a way to
parametrize the elements of $\mathcal U_{12n+4}$. See
Theorem~\ref{thm:main}. Conversely, we can recover $\mathcal E_n$
from~$\mathcal U_{12n+4}$. See Remark~\ref{rk:reverse}. 

In \S\ref{sec:eisenstein}, we give the main result of the paper. Let
$K=\Q(\sqrt{-3})$. In 2011, G. Han and K. Ono connect $3$-core partitions
and ideals of the ring of Eisenstein integers~\cite[Theorem 1.4]{HanOno}
by proving an identity between modular forms. Ono has asked for a direct
explanation for this phenomenon. We will see that our construction allows
to describe $\mathcal E_n$ using this ring. In particular, we obtain in
Theorem~\ref{thm:eisenstein} an explicit bijection between the set of
$3$-core partitions and the ideals of the ring of Eisenstein integers of
norm $3n+1$. This explains the result of Han and Ono. 

In the last section, we give some consequence of our approach. First, we
connect the numbers of $3$-core partitions of $n$ and of $kn+\frac{k-1}3$
for any integer $k$ not divisible by $p$ and such that the $3$-valuation
at all prime with residue $2$ modulo $3$ is even. See
Formula~(\ref{eq:nombre}). This generalizes results of D. Hirschhorn,
and J.\,A. Sellers~\cite{HirschhornSellers3cores}, also proved by another
method by Baruah and Nath in~\cite{BaruahNath}. 
    Then we will prove that the new equality
\begin{equation}
\label{eq:amazing}
a_3(3n^2+(3^{k+1}+2)n+3^k)=a_3(n)a_3(n+3^k)
\end{equation}
holds for any integers $n\geq 1$ and $k\geq 0$. Furthermore, using the
crank $\mathfrak c$ of \S\ref{sec:part2}, we construct an explicit
bijection that explains it. This example illustrates
the advantage of the methods developed in the present work.
    Finally, we discuss a question of Han~\cite[5.2]{Han}.

\section{Construction of a crank} \label{sec:part2}

    For any symmetric matrix $A\in\operatorname{M}_2(\R)$, we denote its
corresponding orthogonal groups by $\mathcal O_2(A)$, that is
$$\mathcal O_2(A)=\{Q \in \operatorname{GL}_2(\R) \mid Q^T AQ=A\},$$
where $Q^T$ denotes the transposed matrix of $Q$. When $A=I$ is the
identity matrix, we simply write $\mathcal O_2(\R)$ for $\mathcal O_2(I)$.

    Let $k$ be a nonnegative integer. In this section, we consider the
sets 
$$\mathcal A_k=\{(x,y)\in\R^2 \mid x^2+y^2=k\}\quad\text{and}\quad
\mathcal B_k=\{(x,y)\in\R^2 \mid x^2+3y^2 = k\}.$$ 

    Note that we can interpret the sets $\mathcal A_k$ and $\mathcal B_k$
using matrices. For that, we write $D\in\operatorname{M}_2(\R)$ for the
diagonal matrix with entries $1$ and $3$. By abuse of notation, we
identify the elements of $\R^2$ with column vectors. Then we have $X\in
\mathcal A_k$ if and only if $X^TX=k$. Similarly, $Y\in\mathcal B_k$ if
and only if $Y^TDY=k$. Moreover, we remark that $\mathcal
O_2(\R)$ and $\mathcal O_2(D)$ act respectively on $\mathcal A_k$ and
$\mathcal B_k$ by left multiplication.

    Now, we denote by $P$ the diagonal matrix with entries $1$ and $\sqrt
3$. In particular, $P^T P=D$, and, by elementary bilinear algebra results,
conjugation and multiplication by $P^{-1}$ induce respectively a group
isomorphism between $\mathcal O_2(\R)$ and $\mathcal O_2(D)$ and a
bijection between $\mathcal A_k$ and $\mathcal B_k$.
We remark that $\mathcal U_k$ is the subset of $\mathcal B_k$ consisting
of points with integral coordinates.

\begin{theorem}
\label{thm:action}
Assume $k=12n+4$ for a nonnegative integer $n$. Then there is a cyclic
subgroup $G$ of $\mathcal O_2(D)$ of order $6$ that acts freely on
$\mathcal U_k$. Moreover, $G$ acts on the fibers of the map $$\mathfrak
c:\mathcal U_k\longrightarrow (\Z/3\Z)^2,\ (x,y)\longmapsto
(\overline{2x},\overline{2x+2y}),$$ where $\overline m$ denotes the
residue class of $m$ in $\Z/3\Z$.
\end{theorem}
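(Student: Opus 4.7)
The plan is to produce $G$ by transport of structure: the isomorphism $\mathcal O_2(\R) \cong \mathcal O_2(D)$ obtained by conjugation by $P^{-1}$ sends the Euclidean rotation $R_{\pi/3}\in\mathcal O_2(\R)$ of angle $\pi/3$, which has order $6$, to an element of $\mathcal O_2(D)$ of the same order. A direct computation yields
$$R := P^{-1} R_{\pi/3} P = \begin{pmatrix} 1/2 & -3/2 \\ 1/2 & 1/2 \end{pmatrix} \in \mathcal O_2(D),$$
and I would take $G := \langle R \rangle$. The link with the Eisenstein integers announced in the introduction is that, under the identification $(x,y) \leftrightarrow x + y\sqrt{-3}$, the matrix $R$ corresponds to multiplication by a primitive sixth root of unity in $\Z[(1+\sqrt{-3})/2]$.

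The first substantive step is to check that $R$, hence every power of it, sends $\mathcal U_k$ into itself. Writing $R(x,y) = ((x-3y)/2,(x+y)/2)$, integrality amounts to the parity condition $x \equiv y \pmod 2$. This is automatic here: from $k = 12n+4 \equiv 0 \pmod 4$ and $x^2 + 3y^2 = k$ one reads off $x^2 \equiv y^2 \pmod 4$, so $x$ and $y$ share their parity. The freeness of the action of $G$ on $\mathcal U_k$ is then essentially immediate: for $1 \leq i \leq 5$, $R^i$ is conjugate to $R_{\pi/3}^i$, whose eigenvalues are primitive sixth roots of unity (or $-1$), none equal to $1$, so the only fixed vector in $\R^2$ is $0 \notin \mathcal U_k$.

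The remaining point is to exhibit an action of $G$ on $(\Z/3\Z)^2$ for which $\mathfrak c$ is equivariant. Setting $a = \overline{2x}$ and $b = \overline{2x+2y}$ and using $\overline 2 = -\overline 1$ in $\Z/3\Z$, one recovers $\overline x = -a$ and $\overline y = a - b$. A short mod-$3$ computation then gives
$$\mathfrak c(R(x,y)) = \bigl(\overline{x-3y},\, \overline{2x-2y}\bigr) = \bigl(-a,\, -(a+b)\bigr).$$
Hence $G$ acts on $(\Z/3\Z)^2$ via $(a,b) \mapsto (-a,-(a+b))$, and $\mathfrak c$ is $G$-equivariant; in particular $G$ sends fibers of $\mathfrak c$ to fibers. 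One can note in passing that the corresponding matrix has order $6$ in $\GL_2(\Z/3\Z)$ and acts transitively on the subset $\{1,2\} \times \Z/3\Z$ containing the image of $\mathfrak c$, which is what will later force all nonempty fibers to be equinumerous.

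The main obstacle is the parity check guaranteeing that $R$ preserves the integer lattice inside $\mathcal B_k$; the rest is routine bilinear algebra combined with the short computation in $\Z/3\Z$ above. Once $R$ has been identified by transport from $R_{\pi/3}$, both the freeness and the equivariance fall out of straightforward eigenvalue and linear algebra considerations.
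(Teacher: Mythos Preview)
Your proof is correct and follows essentially the same route as the paper: the same generator $R=P^{-1}R_{\pi/3}P$, the same parity argument for integrality, and the same strategy of tracking residues modulo $3$ to see that $G$ permutes the fibres. Your execution is slightly more streamlined in two places---you dispatch freeness via the eigenvalues of $R^i$ rather than by a case analysis on the possible stabilizer orders, and your mod-$3$ computation exploits the built-in factor of $2$ in $\mathfrak c$ to avoid the paper's split into the even/odd parity cases---but the underlying argument is the same.
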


\begin{proof}
    The orthogonal group is well-known in dimension $2$. It consists of
rotations and orthogonal symmetry. It follows that
\begin{align*}
\mathcal O_2(D)&=P^{-1}\mathcal O_2(\R)P\\
&=\left\{
\begin{pmatrix}
1&0\\
0&\frac 1 {\sqrt 3}
\end{pmatrix}
\begin{pmatrix}
\cos \theta& -\varepsilon \sin \theta\\ \sin\theta&\cos\theta
\end{pmatrix} \begin{pmatrix} 1&0\\ 0&\sqrt 3 \end{pmatrix} \mid \theta\in
\R,\,\varepsilon\in\{-1,1\}\right\}\\ &=\left\{ \begin{pmatrix}
\cos \theta& -\varepsilon \sqrt 3 \sin \theta\\
\frac 1 {\sqrt 3}\sin\theta&\cos\theta
\end{pmatrix}
\mid
\theta\in \R,\,\varepsilon\in\{-1,1\}\right\},
\end{align*}
where $P$ is defined as above. 
    Now, consider the element $R$ of $\mathcal O_2(D)$ associated
with the parameters $\theta=\pi/3$ and $\varepsilon=1$ in the above
description, that is
$$R=
\frac 1 2
\begin{pmatrix}
1&-3\\
1&1
\end{pmatrix}.$$
This is the conjugate by $P^{-1}$ of the rotation matrix $R_{\pi/3}$ of
angle $\pi/3$ of $\mathcal O_2(\R)$. In particular, $R$ has order $6$.

    We set $G = \langle R\rangle$. Then $G$ acts on $\mathcal B_k$
by left multiplication. Let $(x,y)\in\mathcal B_k$. Write $X=(x\ y)^T$.
Denote by $H$ the $G$-stabilizer of $(x,y)$. It is a subgroup of $G$ of
order $1$, $2$, $3$ or $6$. The order of $H$ is not $6$, because
$(x,y)\neq (0,0)$. We now exclude the orders $2$ and $3$. 
    Suppose $H$ has order $2$. Since $G$ is cyclic, it only has one
subgroup of order $2$, hence $H=\langle R^3\rangle$, and
$R^3 X=-X$. It follows that $X=-X$, that is $X=0$ which is
not possible. 
    Suppose $H$ has order $3$. Then $H=\langle R^2\rangle$. The
condition $R^2 X=X$ gives the system
$$\left\{
\begin{array}{rl}
-3x-3y&=0\\
x-3y&=0
\end{array}
\right..$$
Here again, $X=0$. It follows that $H$ is trivial and the $G$-action is
free. Each $G$-orbit then has size $6$. 

    As a subgroup of $\mathcal O_2(D)$, $G$ acts naturally on $\mathcal
B_k$ by left multiplication. We now check that this action stabilizes
$\mathcal U_k$. Let $(x,y)\in \mathcal U_k$. Then $x$ and $y$ are integers
and $x^2+3y^2=k$. Furthermore, since $k$ is even by assumption, we deduce
that $x^2 \equiv y^2\mod 2$, and $x$ and $y$ have the same parity. Thus,
the coordinates of 
\begin{equation}
\label{eq:imageR}
R
\begin{pmatrix}
x\\
y
\end{pmatrix}
=
\frac 1 2
\begin{pmatrix}
x-3y\\
x+y
\end{pmatrix}
\end{equation}
are integers, as required.

    Now, we study the residue modulo $3$ of the coordinates of a vector of
$\mathcal U_{k}$ under the action $R$. First, we observe that if $x=2x'$
is even, then $x'\equiv 2x\mod 3$, and if $x=2x'+1$ is an odd number, then
$x'\equiv 2(x-1)\mod 3$. 

    Let $X^T=(x,y)\in\mathcal U_k$. Write $(a,b)$ for the coordinates of
$RX$. As remarked above, $x$ and $y$ have the same parity. Assume first
that $x=2x'$ and $y=2y'$ are both even. Then~(\ref{eq:imageR}) gives
$$a=x'-3y'\equiv 2x\mod 3\quad\text{and}\quad
b=x'+y'\equiv  2(x+y)\mod 3.$$
Assume now that $x=2x'+1$ and $y=2y'+1$ are odd. Then
$$a=x'-3y'-1\equiv 2(x-1)-1\equiv 2x\mod 3$$
and
$$b=x'+y'+1\equiv 2(x-1) + 2(y-1)+1\equiv 2(x+y)\mod 3.$$
In any cases, we obtain
\begin{equation}
\label{eq:resteimage}
a\equiv 2x \mod 3\quad\text{and}\quad b\equiv 2(x+y)\mod 3,
\end{equation}
which does not depend on the parity of $x$ and $y$. Note that
$$\mathfrak c(x,y)=(\overline a,\overline b).$$

    Since $(x,y)\in \mathcal U_k$, we have $x^2\equiv 1 \mod 3$ because
$k\equiv 1 \mod 3$ by assumption. Hence, $x\equiv \pm 1\mod 3$ and there
is no condition on the residue modulo $3$ of $y$. The possible residue
classes modulo $3$ for the coordinates of $(x,y)$ are elements of
\begin{equation}
\label{eq:liste}
\{(\overline 1,\overline 1),\,(\overline 2,\overline 1),\,(\overline
1,\overline 0),\,(\overline 2,\overline 2),\,(\overline 1,\overline 2)\,
(\overline 2, \overline 0)\}.
\end{equation}
Now, using~(\ref{eq:resteimage}), we compute the residue modulo $3$ of the
coordinates of $RX$ for a vector $X$ whose coordinates has residue
in~(\ref{eq:liste}). We summarize the result in the following graph. The
vertices are labeled by the residues modulo $3$ of the coordinates of the
vector and an arrow between two vertices represents a multiplication by $R$.
\label{graphe}
\begin{center}
\begin{tikzpicture}
\def\t{{"(\overline 1,\overline 1)","(\overline 2,\overline
1)","(\overline 1,\overline 0)","(\overline 2,\overline 2)","(\overline
1,\overline 2)","(\overline 2,\overline 0)"}}
\foreach \i [count=\ii from 0] in {90.0,
30.0,
-30.0,
-90.0,
-150.0,
-210.0}
\path (\i:20mm) node (p\ii) {$\pgfmathparse{\t[\ii]}\pgfmathresult$};
\foreach \i [count=\ii from 0] in {1,...,5,0}
\draw[->] (p\ii) -- (p\i);
\end{tikzpicture}
\end{center}
    We already remark that the $\Z/3\Z$-residue vector of $(x,y)$ lies
in~(\ref{eq:liste}). Then by the graph, the elements $\mathfrak
c(R^i(x,y))$ for $0\leq i\leq 5$ are exactly the ones of the
set~(\ref{eq:liste}). In particular,  the image of $\mathfrak c$ is the
set~(\ref{eq:liste}) and multiplication by $R$ permutes cyclically the
fibers of $\mathfrak c$.
\end{proof}

\begin{remark} Let $k=12n+4$ be a positive integer.
\label{rk:crank}
\begin{enumerate}[(i)]
\item In the proof the theorem, we only use the fact that $k$ is even and
has residue $1$ modulo $3$. By Chinese Remainder Theorem, this condition
is equivalent to $k\equiv 4\mod 6$. Suppose that $k=6q+4$ with $q$ an odd
integer. Then there is $m\in\Z$ such that $q=2m+1$, and $k=12m+10$. Assume
$(x,y)\in\mathcal U_k$. Then $(x,y)$ satisfies 
equation~(\ref{eq:equationmain}), and by reducing the equality modulo
$12$, we find that the equation $x^2+3y^2=10$ has a solution over
$\Z/12\Z$. However, an exhaustive computation in $\Z/12\Z$ shows that this
equation has no solution. Hence, $\mathcal U_{k} = \emptyset$, and we only
have to consider integers satisfying the assumption of the theorem.
\item Note that the group $G$ constructed in Theorem~\ref{thm:action}
does not lie in the orthogonal group over $\Z$ of $D$. This is a subgroup
of matrices over $\R$ that stabilizes the integral vectors of $\mathcal
B_k$.
\item Since $G$ permutes cyclically the fibers of $\mathfrak c$, each
fiber has the same cardinality. Hence, $\mathfrak c$ is a crank for
$\mathcal U_k$ for any $k$ a positive even integer. In particular, this
proves that
$$|U_k|\equiv 0\mod 6.$$
\item 
Each elements of~(\ref{eq:liste}) appear only once in all $G$-orbits. 
Hence, each $G$-orbits contain only one vector whose coordinates have
$\Z/3\Z$-residue $(\overline 1,\overline 1)$.
\end{enumerate}
\end{remark}

\section{Connection with $3$-core partitions}

    First, we roughly recall how to connect the set $\mathcal C$ of
characteristic vectors given in~(\ref{eq:enscar3}) and the Frobenius
symbol of $3$-core partitions. For more details, we refer
to~\cite[\S3.2]{BrNa2}. To any partition $\lambda$, we attach bijectively
two sets of nonnegative integers $\mathcal A_{\lambda}$ and $\mathcal
L_{\lambda}$ of the same size, respectively called the set of arms and of
legs of $\lambda$. Geometrically, these sets can be interpreted on the
Young diagram of $\lambda$ : an arm of $\lambda$ attached to a diagonal
box $d$ of the diagram is the number of boxes in the same row and to the
right of $d$. Similarly, the leg corresponding to $d$ is the one in the
same column and below $d$. This information is encoded into the Frobenius
symbol of $\lambda$, denoted by $\mathfrak F(\lambda) = (\mathcal
L_{\lambda} \mid \mathcal A_{\lambda})$.

    There is another geometric interpretation of the Frobenius symbol of
$\lambda$ in terms of a pointed $t$-abacus, where $t$ is a positive
integer. We recall it only for $t=3$. A pointed $3$-abacus is an abacus
with three runners labeled $0$, $1$ and $2$ equipped with a fence. On each
runner, slots both over and under the fence are labeled by the set of
nonnegative integers. In each slot, a white or black bead is drawn so that
the number of black beads over the fence is finite and is equal to the one
of white beads under the fence. The pointed $3$-abacus of $\lambda$ is
then obtained as follows. Let $0\leq i\leq 2$ and $q$ be a positive
integer. Then $3q+r\in\mathcal A_{\lambda}$ if and only if the pointed
$3$-abacus of $\lambda$ has a black bead over the fence at position $q$ on
the runner $i$. Similarly, it has a white bead under the fence at position
$q$ on the runner $2-r$ if and only if $3q+r$ is a leg of $\lambda$.

    Let $\lambda$ be a $3$-core partition, and $\mathcal T$ its pointed
$3$-abacus. For $0\leq i\leq 2$, if the $i$th-runner of $\mathcal T$ has
no white bead under the fence, we write $c_i$ for the number of black
beads over the fence. Otherwise, we write $-c_i$ for the number of white
beads under the fence of the $i$th-runner of $\mathcal T$. Then we set
$c_{\lambda} = (c_0,c_1,c_2)$. This tuple is called the characteristic
vector of $\lambda$. In~\cite[\S3.3]{BrNa2}, it is proved that the map 
$$\varphi:\mathcal E\longrightarrow \mathcal C,\ \lambda\longmapsto c_{\lambda}$$
is a well-defined bijection. In the following, we identify $\Z^2$ with
$\mathcal C$ using the map $\Z^2\rightarrow \mathcal C,\ (x,y)\mapsto
(-x-y,x,y)$. By this abuse of notation, we also will write $c_{\lambda} =
(x,y)$ for the characteristic vector of $\lambda$. 

    For positive integer $n$, we denote by $\mathcal C_n$ for the elements
of $\mathcal C$ corresponding to $3$-core partitions of size $n$.

\begin{lemma}
\label{lemma:injection}
Let $n$ be a positive integer. Then the map
$$\vartheta:\mathcal C_n\longrightarrow \mathcal U_{12n+4},\
(x,y)\longmapsto (6x+3y+1,3y+1)$$
is injective.
\end{lemma}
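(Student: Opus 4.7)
The plan is to handle two separate points: first, check that $\vartheta$ actually lands in $\mathcal U_{12n+4}$ (well-definedness), and then deduce injectivity essentially for free from the shape of the formula. Injectivity itself is trivial: if $\vartheta(x,y) = \vartheta(x',y')$, comparing the second coordinates gives $3y+1 = 3y'+1$, hence $y = y'$, and then comparing first coordinates gives $6x+3y+1 = 6x'+3y+1$, hence $x = x'$. So the entire substance of the lemma lies in proving $\vartheta(\mathcal C_n) \subseteq \mathcal U_{12n+4}$.

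For the well-definedness, I would first recall (from the Garvan--Kim--Stanton construction or from~\cite{BrNa2}) the formula expressing the size $n = |\lambda|$ of a $3$-core $\lambda$ in terms of its characteristic vector $(c_0,c_1,c_2) \in \mathcal C$: one has
\begin{equation*}
|\lambda| = \tfrac32(c_0^2+c_1^2+c_2^2) + c_1 + 2c_2.
\end{equation*}
Applying the identification $(x,y) \mapsto (-x-y, x, y)$ from $\Z^2$ to $\mathcal C$, so that $c_0 = -x-y$, $c_1 = x$, $c_2 = y$, and using $c_0^2+c_1^2+c_2^2 = 2(x^2+xy+y^2)$, this simplifies to
\begin{equation*}
n = 3(x^2+xy+y^2) + x + 2y
\end{equation*}
whenever $(x,y) \in \mathcal C_n$.

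Next I would perform the direct algebraic verification. Expanding,
\begin{align*}
(6x+3y+1)^2 + 3(3y+1)^2 &= 36x^2 + 36xy + 12x + 9y^2 + 6y + 1 + 27y^2 + 18y + 3 \\
&= 36x^2 + 36xy + 36y^2 + 12x + 24y + 4 \\
&= 12\bigl(3(x^2+xy+y^2) + x + 2y\bigr) + 4 = 12n+4,
\end{align*}
so $\vartheta(x,y) \in \mathcal B_{12n+4}$. Since $6x+3y+1$ and $3y+1$ are manifestly integers, we in fact have $\vartheta(x,y) \in \mathcal U_{12n+4}$, which completes the proof.

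The only real obstacle is locating the size formula; however, this is classical and is the starting point of the characteristic-vector approach recalled just above the lemma, so it can be quoted directly from~\cite{GarvanKimStanton} or~\cite{BrNa2}. Everything else is a one-line algebraic identity and a triviality.
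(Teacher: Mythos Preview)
Your proof is correct and follows essentially the same route as the paper: quote the size formula $n=3x^2+3xy+3y^2+x+2y$ for a $3$-core with characteristic vector $(x,y)$, then verify the quadratic identity $(6x+3y+1)^2+3(3y+1)^2=12n+4$ to obtain well-definedness, and observe that injectivity is immediate. The only cosmetic difference is that you spell out the injectivity and the derivation of the size formula from the Garvan--Kim--Stanton expression, whereas the paper simply cites \cite[Corollary~3.20]{BrNa2} and says ``it is clearly injective.''
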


\begin{proof}
    For any $3$-core partition $\lambda$ with characteristic vector
$c_{\lambda} = (x,y)\in\Z^2$, recall that gives $|\lambda| =
3x^2+3xy+3y^2+x+2y$. See for example~\cite[Corollary 3.20]{BrNa2}.
However,
$$3x^2+3xy+3y^2+x+2y=\frac{1}{12}(6x+3y+1)^2+\frac{1}4(3y+1)^2+\frac 1
3.$$
Furthermore, $\lambda\in\mathcal E_n$ if and only if $|\lambda|=n$ if and
only if
$$(6x+3y+1)^2+3(3y+1)^2=12n+4.$$
In particular, $(6x+3y+1,3y+1)\in\mathcal U_{12n+4}$, and the map
$\vartheta$ is well-defined. It is clearly injective.
\end{proof}

    The next result connects $\mathcal E_n$ and $\mathcal U_{12n+4}$. This
gives a way to parameterize the solutions of
Equation~(\ref{eq:equationmain}).

\begin{theorem}
\label{thm:main}
Let $n$ be a nonnegative integer. Then
\begin{align*}
\mathcal
U_{12n+4} = & \{(6x+3y+1,3y+1),\, (3x-3y-1,3x+3y+1),\, (-3x-6y-2,3x),\,\\
            &(-6x-3y-1,-3y-1),\,(-3x+3y+1,-3x-3y-1),\\
            &(3x+6y+2,-3x) \mid (x,y)\in\mathcal C_n\}.
\end{align*}
\end{theorem}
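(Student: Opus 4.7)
The plan is to recognize the six tuples in the statement as the $G$-orbit of $\vartheta(x,y)$ in $\mathcal{U}_{12n+4}$, and then to use the orbit decomposition provided by Theorem~\ref{thm:action} to cover all of $\mathcal{U}_{12n+4}$. A direct matrix computation with $R=\tfrac{1}{2}\begin{pmatrix} 1 & -3 \\ 1 & 1 \end{pmatrix}$ shows that the six vectors $R^i\vartheta(x,y)$, for $i=0,1,\ldots,5$, are exactly the six tuples listed in the theorem, in this order. Thus it suffices to establish
\[
\mathcal{U}_{12n+4} \;=\; \bigsqcup_{i=0}^{5} R^i\, \vartheta(\mathcal{C}_n).
\]

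Next I would compute $\mathfrak{c}(\vartheta(x,y))$: from $2(6x+3y+1)\equiv 2\pmod{3}$ and $2(6x+3y+1)+2(3y+1)\equiv 1\pmod{3}$, one obtains $\mathfrak{c}(\vartheta(x,y)) = (\overline{2},\overline{1})$ for every $(x,y)\in \mathcal{C}_n$. In other words, $\vartheta(\mathcal{C}_n)$ is contained in the single fiber $F:=\mathfrak{c}^{-1}(\overline{2},\overline{1})$. By Theorem~\ref{thm:action}, $G=\langle R\rangle$ acts freely on $\mathcal{U}_{12n+4}$ with orbits of size $6$, and multiplication by $R$ permutes cyclically the six fibers of $\mathfrak{c}$; hence every $G$-orbit meets $F$ in exactly one point and $\mathcal{U}_{12n+4}=\bigsqcup_{i=0}^{5} R^i F$. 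The displayed equality therefore reduces to the single claim $\vartheta(\mathcal{C}_n)=F$.

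The core of the argument is the reverse inclusion $F\subseteq \vartheta(\mathcal{C}_n)$. Given $(a,b)\in F$, the congruences $2a\equiv 2$ and $2a+2b\equiv 1\pmod{3}$ force $a\equiv b\equiv 1\pmod{3}$; and since $12n+4$ is even, the parity argument from the proof of Theorem~\ref{thm:action} applied to $(a,b)$ gives $a\equiv b\pmod{2}$. Consequently $y:=(b-1)/3$ and $x:=(a-b)/6$ are integers satisfying $\vartheta(x,y)=(a,b)$. The algebraic identity
\[
(6x+3y+1)^2 + 3(3y+1)^2 \;=\; 12\bigl(3x^2+3xy+3y^2+x+2y\bigr)+4,
\]
combined with $(a,b)\in \mathcal{U}_{12n+4}$, then forces $3x^2+3xy+3y^2+x+2y=n$, so by the size formula recalled in the proof of Lemma~\ref{lemma:injection} we have $(x,y)\in \mathcal{C}_n$.

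The main obstacle lies precisely in this last step: one must ensure both the integrality of the preimage $(x,y)$ and the match with the $3$-core size formula. Integrality is delivered simultaneously by the two mod-$3$ congruences defining $F$ and the mod-$2$ parity relation inherited from $\mathcal{U}_{12n+4}$; the size condition is then an immediate consequence of the quadratic identity above. Once the bijection $\vartheta:\mathcal{C}_n\to F$ is in place, Theorem~\ref{thm:action} supplies the orbit decomposition and the theorem follows, with the additional byproduct $|\mathcal{U}_{12n+4}|=6|\mathcal{C}_n|=6\,a_3(n)$.
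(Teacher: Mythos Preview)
Your argument is correct and follows essentially the same route as the paper: identify the image of $\vartheta$ with a single fiber of the crank (equivalently, the set $\mathcal F_n=\{(u,v)\in\mathcal U_{12n+4}\mid u\equiv v\equiv 1\pmod 3\}$), prove the reverse inclusion by the parity and mod-$3$ congruences to recover integers $x=(a-b)/6$ and $y=(b-1)/3$, and then spread out over $\mathcal U_{12n+4}$ via the free $G$-action from Theorem~\ref{thm:action}. The only cosmetic difference is that you name the target fiber as $\mathfrak c^{-1}(\overline 2,\overline 1)$ whereas the paper describes it directly by the residue condition on $(u,v)$; these are the same set, and the orbit computation $R^i\vartheta(x,y)$ is identical.
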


\begin{proof}
    Write $R$ for the matrix of $\mathcal O_2(D)$ described in the proof
of Theorem~\ref{thm:action}. Since $12n+4$ is even, the group
$G=\langle R \rangle$ acts on $\mathcal U_{12n+4}$, and each $G$-orbits
has size $6$. Let $\vartheta:\mathcal C_n\rightarrow \mathcal U_{12n+4}$
be the injective map given in Lemma~\ref{lemma:injection}. First, we will
prove that $\operatorname{Im}(\vartheta)$ is the set 
$$\mathcal F_n=\{
(u,v)\in\mathcal U_{12n+4} \mid u\equiv 1\mod 3\ \text{ and }\ v\equiv
1\mod 3\}.$$ 
    It is clear that $\operatorname{Im}(\vartheta)\subseteq \mathcal F_n$
by definition of $\vartheta$. Conversely, let $(u,v)\in\mathcal F_n$.
Write $z\in \Z$ such that $u=z+v$. We have
$$12n+4=u^2+3v^2=(z+v)^2+3v^2=z^2+2zv+4v^2.$$
    Considering this equality modulo $2$, we obtain that $z^2\equiv 0\mod
2$, hence $z$ is even. On the other hand, $z+v$ and $v$ have the same
residue modulo $3$ since $(z+v,v)\in\mathcal F_n$. This implies that
$z\equiv 0\mod 3$. Hence, $z$ is divisible by $6$, and there is $x\in\Z$
such that $z=6x$. Furthermore, $v\equiv 1\mod 3$. Thus, there is $y\in\Z$
such that $v=3y+1$, and
$$\vartheta(x,y)=(6x+3y+1,3y+1)=(z+v,v)=(u,v),$$
proving that $\mathcal F_n\subseteq \operatorname{Im}(\vartheta)$. Hence,
$$\operatorname{Im}(\vartheta)=\mathcal F_n.$$

    However, we proved in Theorem~\ref{thm:action} that $\mathcal F_n$ is
a set of representatives of the $G$-orbits on $\mathcal U_{12n+4}$. Note
that for any $X^T=(u,v)\in\mathcal U_{12n+4}$, 
\begin{eqnarray}
RX=
\frac 1 2
\begin{pmatrix}
u-3v\\
u+v
\end{pmatrix},&
R^2X=
\dfrac 1 2
\begin{pmatrix}
-u-3v\\
u-v
\end{pmatrix},&
R^3X=
-X,\\
R^4X=
\frac 1 2
\begin{pmatrix}
-u+3v\\
-u-v
\end{pmatrix},&
R^5X=
\dfrac 1 2
\begin{pmatrix}
u+3v\\
-u+v
\end{pmatrix},\nonumber
\end{eqnarray}
Finally, we conclude by computing the orbit of $\vartheta(x,y)$ for
$(x,y)\in\mathcal C_n$ using these relations.
\end{proof}

\begin{remark}
For any $(u,v)\in\mathcal U_{12n+4}$, we write $G\cdot(u,v)$ for
its $G$-orbit. By Theorem~\ref{thm:main}, we have
$$u(12n+4)=|\mathcal U_{12n+4}|=\sum_{(x,y)\in \mathcal C_n} |G\cdot
\vartheta(x,y)|=6|\mathcal C_n|= 6|\mathcal E_n|.$$
In particular, this proves~\cite[Theorem 3.1]{BaruahNath}.
\end{remark}



\begin{remark}
\label{rk:reverse}
By Theorem~\ref{thm:action} and Theorem~\ref{thm:main} the fiber of
$(\overline 1,\overline 1)$ with respect to the crank $\mathfrak c$ is the
set $\mathcal C_n$. Then we obtain a way to recover $\mathcal E_n$ from
$\mathcal U_{12n+4}$.
\end{remark}

\begin{example}
We consider the equation $x^2+3y^2=448$ over $\Z^2$. First, we remark that
$448=12\cdot 37+4$. Then we have to describe $\mathcal E_{37}$. On the
other hand, $37=4\times 9+1$. By~\cite[Corollary 5.3]{BrNa2}, $\mathcal
E_{9}$ and $\mathcal E_{37}$ are in bijection. Now, we remark by looking
at its character table that the symmetric group $\Sym_9$ has exactly two
defect zero characters labeled by the partitions
$$\lambda=(5,3,1) \quad\text{and}\quad\mu=(3,2^2,1^2).$$
These are the $3$-core partitions of $9$. Furthermore, we have $\mathfrak
F(\lambda)=(2,0\mid 1,4)$ and $\mathfrak F(\mu)=\mathfrak
F(\lambda^*)=(4,1|0,2)$, and the corresponding pointed $3$-abacus are
\medskip

\begin{center}
\begin{tabular}{lll}
\begin{tikzpicture}[line cap=round,line join=round,>=triangle
45,x=0.7cm,y=0.7cm, scale=0.8,every node/.style={scale=0.8}]

\draw [dash pattern=on 2pt off 2pt](-2.5,1.5)-- (1.5,1.5);
\draw(-3,1.5)node{$\mathfrak f$};

\draw (-2,-0.3) node[anchor=north west] {$0$};

\draw (-1.7,3)-- (-1.7,-0.3);

\begin{scriptsize}

\draw (-1.7,2) circle (2.5pt);

\draw (-1.7,3) circle (2.5pt);

\draw (-1.7,1) circle (2.5pt);

\draw (-1.7,0)[fill=black] circle (2.5pt);
\end{scriptsize}

\draw (-0.9,-0.3) node[anchor=north west] {$1$};

\draw (-0.6,3)-- (-0.6,-0.3);

\begin{scriptsize}
	
\draw  (-0.6,2)[fill=black] circle (2.5pt);
\draw (-0.6,3)[fill=black]  circle (2.5pt);

\draw (-0.6,1)[fill=black] circle (2.5pt);

\draw (-0.6,0)[fill=black]  circle (2.5pt);
\end{scriptsize}

\draw (0.2,-0.3) node[anchor=north west] {$2$};

\draw (0.5,3)-- (0.5,-0.3);

\begin{scriptsize}

\draw  (0.5,2) circle (2.5pt);
\draw (0.5,3) circle (2.5pt);
\draw (0.5,1) circle (2.5pt);
\draw (0.5,0)[fill=black]  circle (2.5pt);
\end{scriptsize}

\end{tikzpicture}
&
\hspace{2cm}
&
\begin{tikzpicture}[line cap=round,line join=round,>=triangle
45,x=0.7cm,y=0.7cm, scale=0.8,every node/.style={scale=0.8}]

\draw [dash pattern=on 2pt off 2pt](-2.5,1.5)-- (1.5,1.5);
\draw(-3,1.5)node{$\mathfrak f$};

\draw (-2,-0.3) node[anchor=north west] {$0$};

\draw (-1.7,3)-- (-1.7,-0.3);

\begin{scriptsize}

\draw (-1.7,2)[fill=black] circle (2.5pt);

\draw (-1.7,3) circle (2.5pt);

\draw (-1.7,1)[fill=black] circle (2.5pt);

\draw (-1.7,0)[fill=black] circle (2.5pt);
\end{scriptsize}

\draw (-0.9,-0.3) node[anchor=north west] {$1$};

\draw (-0.6,3)-- (-0.6,-0.3);

\begin{scriptsize}
	
\draw  (-0.6,2) circle (2.5pt);
\draw (-0.6,3)  circle (2.5pt);

\draw (-0.6,1) circle (2.5pt);
\draw (-0.6,0)  circle (2.5pt);

\end{scriptsize}

\draw (0.2,-0.3) node[anchor=north west] {$2$};

\draw (0.5,3)-- (0.5,-0.3);

\begin{scriptsize}

\draw  (0.5,2)[fill=black] circle (2.5pt);
\draw (0.5,3) circle (2.5pt);
\draw (0.5,1)[fill=black] circle (2.5pt);
\draw (0.5,0)[fill=black] circle (2.5pt);

\end{scriptsize}
\end{tikzpicture}\\
\hspace{1.4cm}$\lambda$&&\hspace{1.4cm}$\mu$
\end{tabular}
\end{center}
\medskip
The characteristic vectors of $\lambda$ and $\mu$ are then $(-1,2,-1)$ and
$(1,-2,1)$. Moreover, using the bijection~\cite[Corollary 5.3]{BrNa2}, we
deduce that the $3$-core partitions of $37$ have characteristic vectors
$(3,-4,1)$ and $(1,4,-3)$, and
$$\vartheta(-4,1)=(-20,4)\quad\text{and}\quad
\vartheta(4,-3)=(16,-8).$$
We conclude with Theorem~\ref{thm:main} that 
\begin{eqnarray*}
\mathcal U_{448}=&\{( -20, 4 ), (-16, -8 ), ( 4, -12 ), ( 20, -4 ), ( 16,
8 ), ( -4, 12 ),\\ 
&( 16, -8 ), ( 20, 4 ), ( 4, 12 ), ( -16, 8 ), ( -20, -4 ), ( -4, -12 )
\}.
\end{eqnarray*}

\begin{remark}
Consider the map 
$$\operatorname{conj}:\mathcal
U_{12n+4}\longrightarrow \mathcal U_{12n+4},\,(u,v)\longmapsto
\left(\frac{1}2(-u+3v),\frac 1 2 (u+v)\right).$$
This map is well-defined, because for $(u,v)\in\mathcal U_{12n+4}$, we
have $\frac{1}2(-u+3v)\in\Z$ and $\frac 1 2 (u+v)\in\Z$ since $u\equiv
v\mod 2$, and
$$\left(\frac{1}2(-u+3v)\right)^2 + \left(\frac 1 2
(u+v)\right)^2=u^2+3v^2=12n+4.$$
    Now, for a partition $\lambda$ of $n$, we write $\lambda^*$ for its
conjugate partition. By~\cite[Proposition 3.12]{BrNa2}, if
$c_{\lambda}=(x,y)$, then $c_{\lambda^*}=(-x,x+y)$. Furthermore, for any
$3$-core partition $\lambda$, we have $$\operatorname{conj}\circ
\vartheta(c_{\lambda})=\vartheta(c_{\lambda^*}).$$ Hence, the image under
$\vartheta$ of the set of self-conjugate $3$-cores of $n$ consisting of
the vectors $(u,v)\in\mathcal U_{12n+4}$  such that $(u,v)$ lies in the
$1$-eigenspace of 
$$S=
\frac 1 2
\begin{pmatrix}
-1&3\\
1&1
\end{pmatrix}.$$
This is the set of $(u,u)\in\Z^2$ such that $u\equiv 1 \mod 3$ and
$u^2=3n+1$. In particular, there is at most one self-conjugate $3$-core
partition of $n$. Let $(u,u)$ be such an element and $k$ be any integer
not divisible by $3$. Let $\varepsilon\in\{-1,1\}$ be such that
$\varepsilon k\equiv 1\mod 3$.
Then $\varepsilon ku\equiv 1\mod 3$ and
$$(\varepsilon k u)^2=3\left(k^2n+\frac{k^2-1}3\right)+1.$$
It follows that the map $\vartheta^{-1}(u,u) \mapsto
\vartheta^{-1}(\varepsilon ku,\varepsilon ku)$ 
from the set of self-conjugate $3$-core partitions of $n$ into the ones of
$3$-core partitions of $k^2n+\frac{k^2-1}3$ is well-defined and bijective.
Hence,
$$\operatorname{asc}_3(n)=\operatorname{asc}_3\left(k^2n+\frac{k^2-1}3\right),$$
where $\operatorname{asc}_3(n)$ denotes the number of self-conjugate
$3$-core partitions of $n$. This generalizes~\cite[Theorem 3.6]{BaruahNath}.

    Note that the group generated by $R$ and $S$ is a dihedral group of
order $12$.
\end{remark}
\end{example}

\section{Connection with the ring of Eisenstein integers}
\label{sec:eisenstein}

    Theorem~\ref{thm:main} shows that we can recover $\mathcal E_n$ from
$\mathcal U_{12n+4}$. See also Remark~\ref{rk:reverse}. In this section,
we give another way to determine $\mathcal U_{12n+4}$ using the ring of
Eisenstein integers $\mathcal R=\Z[\mathbf j]$, where $\mathbf
j=e^{2i\pi/3}$. We write $\mathcal N:\mathcal R\rightarrow \N,\,z\mapsto
z\overline z$ for the norm of $\mathcal R$, where $\overline z$ denotes
the complex conjugation. Many results here are standard, but we recall
them for the convenience of the reader. Let $u,\,v\in\Z$. We have $\frac u
2 + \frac {v\sqrt 3} 2 i = \frac {u+v} 2 + v \mathbf j$, thus $\frac u 2 +
\frac {v\sqrt 3} 2 i \in \mathcal R$ if and only if $u$ and $v$ have the
same parity. Note also that, in this case, we have
$$\mathcal N\left(\frac u 2 + \frac{v\sqrt 3} 2 \,i\right)=\frac 1
4(u^2+3v^2).$$ 
    However, as remarked in the proof of Theorem~\ref{thm:action}, each
element of $\mathcal U_{12n+4}$ has this property, hence the map
\begin{equation}
\label{eq:defrho}
\rho:\mathcal U_{12n+4}\longrightarrow \mathcal R,\, (x,y)\longmapsto
\frac x 2+\frac {y\sqrt 3} 2 i
\end{equation}
is well-defined. Furthermore,
$(u,v)\in \mathcal U_{12n+4}$ if and only if
$$\mathcal N(\rho(u,v))=3n+1.$$
Write $\Pi_1$ and $\Pi_2$ for the sets of prime numbers with residue $1$
and $2$ modulo $3$, respectively. For any integer $k$, we denote by
$\nu_p(k)$ the $p$-adic valuation of $k$.
    By~\cite[Proposition 9.1.4]{IrelandRosen}, each $p\in \Pi_2$ is
irreducible in $\mathcal R$, and for $p\in \Pi_1$, there is an irreducible
element $x_p \in \mathcal R$ such that $p=\mathcal N(x_p)$. Furthermore,
we define
\begin{equation}
\label{eq:defVn}
\mathcal V_n=\prod_{p \in \Pi_1} \{0 \leq j_p \leq \nu_p(3n+1) \},
\end{equation}
and for any $\underline j=(j_p \mid p \in \Pi_1)\in\mathcal V_n$, we set
\begin{equation}
\label{eq:defxj}
x_{\underline j}= \prod_{p \in \Pi_1} x_p^{j_p}\
\overline{x_p}^{\nu_p(3n+1) - j_p}. 
\end{equation}

\begin{theorem}
\label{thm:eisenstein}
The set $\mathcal U_{12n+4}$ is nonempty if and only if $\nu_p(3n+1)$ is
even for all $p\in \Pi_2$. In this case, we set 
\begin{equation}
\label{eq:defqnyj}
q_n = \prod_{p \in \Pi_2} p^{\frac 1 2 \nu_p(3n+1)} \quad \text{and}
\quad y_{\underline j} = x_{\underline j} q_n \quad \text{for }
\underline j \in \mathcal V_n.
\end{equation}
Then, for any $\underline j \in \mathcal V_n$,
there are integers $u_{\underline j}$ and $v_{\underline j}$ with residue
$1$ modulo $3$ such that 
$$y_{\underline j} = \frac 1 2 \left(u_{\underline j} + v_{\underline
j}\, i\sqrt 3 \right),$$ 
and
$$\mathcal C_n=\{\vartheta^{-1}(u_{\underline j},\,v_{\underline j}) \mid \underline
j \in \mathcal V_n\},$$
where $\vartheta:\mathcal C_n\rightarrow \mathcal U_{12n+4}$ is the map
defined in Lemma~\ref{lemma:injection}.
\end{theorem}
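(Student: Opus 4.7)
The plan is to exploit the fact that $\mathcal R = \Z[\mathbf j]$ is a Euclidean, hence principal ideal, domain with unit group $\{\pm 1,\pm\mathbf j,\pm\mathbf j^2\}$ of order $6$, and that the map $\rho$ of~(\ref{eq:defrho}) converts Equation~(\ref{eq:equationmain}) for $k=12n+4$ into the norm equation $\mathcal N(z)=3n+1$ in $\mathcal R$. First I would verify that $\rho$ is a bijection from $\mathcal U_{12n+4}$ onto $\{z\in\mathcal R\mid \mathcal N(z)=3n+1\}$ (this is just the parity observation at the start of the section together with the norm formula). I would also record the key compatibility
$$(-\mathbf j^2)\cdot\rho(x,y)=\rho\!\left(R\cdot (x,y)^T\right),$$
proved by a direct computation using $\mathbf j=-\tfrac12+\tfrac{\sqrt 3}2 i$, so that the $G$-action of Theorem~\ref{thm:action} corresponds under $\rho$ to the multiplication action of the unit group of $\mathcal R$; in particular $G$-orbits on $\mathcal U_{12n+4}$ correspond to associate classes in $\mathcal R$ of elements of norm $3n+1$.

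For the first assertion I would use the multiplicativity of $\mathcal N$ and the factorization facts recalled before the theorem: each $p\in\Pi_2$ is inert with $\mathcal N(p)=p^2$, each $p\in\Pi_1$ splits as $p=x_p\overline{x_p}$, and $3$ is ramified but does not divide $3n+1$. If some $p\in\Pi_2$ has $\nu_p(3n+1)$ odd, no element of $\mathcal R$ has norm $3n+1$, so $\mathcal U_{12n+4}=\emptyset$. Conversely, when all such valuations are even, the product $q_n\cdot\prod_{p\in\Pi_1} x_p^{\nu_p(3n+1)}$ has norm $3n+1$.

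Assuming the parity condition, unique factorization in $\mathcal R$ implies that every $z$ with $\mathcal N(z)=3n+1$ can be written (uniquely up to a unit) as
$$z=\epsilon\cdot q_n\cdot\prod_{p\in\Pi_1} x_p^{a_p}\,\overline{x_p}^{\nu_p(3n+1)-a_p},$$
with $0\le a_p\le\nu_p(3n+1)$ and $\epsilon$ a unit. The tuple $(a_p)_{p\in\Pi_1}$ is precisely an element $\underline j\in\mathcal V_n$, so every such $z$ is an associate of $y_{\underline j}$. Hence there are exactly $|\mathcal V_n|$ associate classes, equivalently $|\mathcal V_n|$ $G$-orbits on $\mathcal U_{12n+4}$.

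To finish, I would invoke the graph in the proof of Theorem~\ref{thm:action}, according to which each $G$-orbit contains exactly one element of $\mathcal U_{12n+4}$ whose coordinates lie in $\mathcal F_n$, namely have residues $(\overline 1,\overline 1)$ modulo $3$. Choosing, within each associate class, the unique unit $\epsilon_{\underline j}$ such that $\epsilon_{\underline j}\,y_{\underline j}=\tfrac12(u_{\underline j}+v_{\underline j}\,i\sqrt 3)$ with $u_{\underline j},v_{\underline j}\equiv 1\bmod 3$, and absorbing $\epsilon_{\underline j}$ into the choice of $x_p$, I obtain the pair $(u_{\underline j},v_{\underline j})\in\mathcal F_n=\operatorname{Im}(\vartheta)$. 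Theorem~\ref{thm:main} then gives $\vartheta^{-1}(u_{\underline j},v_{\underline j})\in\mathcal C_n$, and the equality $|\mathcal V_n|=|\mathcal C_n|$ (from the orbit count) forces these to exhaust $\mathcal C_n$. The main obstacle is the bookkeeping with units: one must check that the choice of $\epsilon_{\underline j}$ is canonical and that distinct $\underline j\in\mathcal V_n$ give distinct associate classes, which both reduce to unique factorization in $\mathcal R$ together with the simply transitive action of the unit group on the six mod-$3$ residue pairs in~(\ref{eq:liste}).
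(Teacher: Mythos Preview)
Your proposal is correct and follows essentially the same route as the paper: translate $\mathcal U_{12n+4}$ to the norm equation $\mathcal N(z)=3n+1$ via $\rho$, use unique factorization in the Euclidean domain $\mathcal R$ to parametrize solutions up to units by $\mathcal V_n$, identify the $G$-action with multiplication by units (your generator $-\mathbf j^2$ equals the paper's $\mathbf j+1$), and then pick out the $(\overline 1,\overline 1)$ representative in each orbit via Theorem~\ref{thm:action} and Theorem~\ref{thm:main}. One small caveat: the phrase ``absorbing $\epsilon_{\underline j}$ into the choice of $x_p$'' is not quite right, since changing $x_p$ by a unit alters $y_{\underline j}$ by a power of that unit depending on $\underline j$, so a single renormalization of the $x_p$ cannot simultaneously kill all the $\epsilon_{\underline j}$; but the paper's own proof carries the unit $\xi$ explicitly rather than absorbing it, and the statement is to be read in that spirit.
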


\begin{proof}
First, we have
\begin{align*}
3n+1 &= \prod_{p\in\Pi_1}p^{\nu_p(3n+1)} \cdot \prod_{p\in \Pi_2}
p^{\nu_p(3n+1)}\\
&= \prod_{p\in \Pi_1} x_p^{\nu_p(3n+1)}\, \overline{x}_p^{\nu_p(3n+1)}
\cdot \prod_{p\in \Pi_2}p^{\nu_p(3n+1)},
\end{align*}
that gives a factorization into irreducible elements in $\mathcal R$. Now,
we note that there is $z=\frac a 2 + \frac b 2 i\sqrt 3 \in \mathcal R$
such that $3n+1=\mathcal N(z) = z\overline z$ if and only if
$a^2+3b^2=12n+4$ if and only if $(a,b)\in\mathcal U_{12n+4}$. 
    Assume there is $z\in \mathcal R$ such that $3n+1=\mathcal N(z)$.
By~\cite[Proposition 1.4.2]{IrelandRosen}, $\mathcal R$ is a Euclidean
domain. Hence, for any $p\in \Pi_2$ such that $\nu_p(3n+1)\neq 0$, $p$
divides $z$ or $\overline z$ in $\mathcal R$. We can assume without loss
of generality that $p$ divides $z$. Then $p=\overline p$ divides
$\overline z$, proving that $p^2$ divides $3n+1$ in $\Z$. It follows that
$\nu_p(3n+1)$ is even. Furthermore, by the uniqueness (up to a unit
element) of the decomposition into irreducible elements in a Euclidean
ring, we deduce first that $q_n$ divides $z$, and then that there are
$\xi\in \mathcal R^{\times}$ and integers $0 \leq j_p \leq \nu_p(3n+1)$
and $0 \leq j_p' \leq \nu_p(3n+1)$ for any $p\in\Pi_1$, such that
$$z=\xi q_n \prod_{p\in\Pi_1} x_p^{j_p} \, {\overline{x_p}}^{j_p'}.$$
Hence, 
$$3n+1=z\overline z=q_n^2\prod_{p\in\Pi_1} x_p^{j_p+j'_p}\
{\overline{x_p}}^{j_p + j_p'}.$$
By the uniqueness of the decomposition, we deduce that $j_p + j_p' =
\nu_p(3n+1)$. It follows that 
\begin{equation}
\label{eq:factorisation}
z=\xi q_n \prod_{p\in\Pi_1} x_p^{j_p}\,
\overline{x_p}^{\nu_p(3n+1)-j_p} = \xi q_n x_{\underline j}=\xi
y_{\underline j},
\end{equation}
with $\underline{j} = (j_p\mid p\in\Pi_1)$.
    Conversely, any factorization in $\mathcal R$ of $3n+1$ of the form $z
\overline z$ is as in~(\ref{eq:factorisation}) for some $\xi\in
\mathcal R^{\times}$ and $\underline j \in \mathcal V_n$.

    Now, by~\cite[Proposition 9.1.1]{IrelandRosen}, $\mathcal R^{\times}$
has order $6$ and $\mathcal R^{\times}=\langle \mathbf j+1 \rangle$. Note
that $\mathcal R^{\times}$ acts on $\rho(\mathcal U_{12n+4})$ by
multiplication since the elements of $\mathcal R^{\times}$ have norm $1$.
On the other hand, if $z = \frac 1 2(a+bi\sqrt 3)$ for $a,\,b\in \Z$, then
$$(\mathbf j+1)z=\frac 1 2\left( \frac 1 2(a-3b)+\frac 1 2 (a+b) i \right).$$
Comparing with~(\ref{eq:imageR}), we see that $\langle R \rangle$ acts on
$\mathcal C_n$ like $\mathcal R^{\times}$ on $\rho(\mathcal U_{12n+4})$. In
particular, by Theorem~\ref{thm:main}, for any $\underline j \in \mathcal
V_n$, there is $\xi\in \mathcal R^{\times}$ such that $\xi q_n x_{\underline
j}=\frac 1 2(u_{\underline j}+v_{\underline j} i)$ satisfies
$u_{\underline j}\equiv v_{\underline j}\equiv 1 \mod 3$. Then
$\vartheta(\mathcal C_n)=\{(u_{\underline j}, v_{\underline j}) \mid
\underline j\in\mathcal V_n\}$ by Theorem~\ref{thm:main}, as required.
\end{proof}

\begin{remark}
\label{rk:eisenstein}
By Theorem~\ref{thm:main} and Theorem~\ref{thm:eisenstein}, the
description of the set of $3$-core partitions of $n$ is reduced to 
\begin{enumerate}[(i)]
\item The determination of the prime decomposition in $\Z$ of $3n+1$.
\item The decomposition of the prime $p\in \Pi_1$ dividing $3n+1$ into
irreducible elements in $\Z[\mathbf j]$. This factorization is of the form
$z\overline z$ for irreducible elements $z$ and $\overline z$ of
$\Z[\mathbf j]$.
\end{enumerate}
\end{remark}

\begin{remark}
\label{rk:t3}
By Theorem~\ref{thm:eisenstein}, we have
$$|\mathcal E_n|=|\mathcal C_n|=|\{(u_{\underline j},\,v_{\underline
j})\mid \underline j \in \mathcal V_n \}|=|\mathcal V_n|=\prod_{p\in
\Pi_1} (\nu_p(3n+1)+1),$$
and we recover a well-known result first proven by Granville and
Ono~\cite{GranvilleOno}, and also fund in~\cite[Theorem
6]{HirschhornSellers3cores} and~\cite[Corollary 3.3]{BaruahNath}.
\end{remark}

    Theorem~\ref{thm:main} can also be useful to find the set of $3$-core
partitions of an integer $n$; a hard problem in general.

\begin{example}
We determine the $3$-core partitions of $n=100$. First, consider the
equation $x^2+3y^2=4\cdot 301$. Then describe $\mathcal U_{1204}$ using
Theorem~\ref{thm:eisenstein}. We remark that $301=7\times 43$, and
$7$ and $43$ lie in $\Pi_1$. However, $7=\mathcal N(2+ i\sqrt 3)$ and
$43=\mathcal N(4+3i\sqrt 3)$. Set $x_3=2+i\sqrt 3$ and $x_{43}=4+3i\sqrt
3$, and consider
$$
\alpha_1=\overline{x_7 x_{43}}=\frac{1}{2}(-2-20i\sqrt 3),\
\alpha_2=x_7 \overline{x_{43}}=\frac{1}{2}(34-4i\sqrt 3),$$
$$
\alpha_3=\overline{x_7} x_{43}=\frac{1}{2}(34+4 i\sqrt 3)\quad \text{and}
\quad
\alpha_4=x_7 x_{43}=\frac{1}{2}(-2+20i\sqrt 3),\
$$
Thus, $y_{0,0}=\alpha_1$ and $y_{0,1}=\alpha_3$, and
$$y_{1,0}=(1+\mathbf j)^2\alpha_2=\frac{1}{2}(-11+19i\sqrt
3)\quad\text{and}\quad 
y_{1,1}=(1+\mathbf j)^2\alpha_4=\frac{1}{2}(-29-11i\sqrt
3).
$$
Thus,
$$\mathcal F_{100}=\{(-2,-20),(-11,19),(-29,-11),(34,4)\}.$$
Now, as in the proof of Theorem~\ref{thm:main}, we obtain
$$\vartheta(3,-7)=(-2,-20),\quad
\vartheta(-5,6)=(-11,19),\quad
\vartheta(-3,-4)=(-29,-11)$$
and
$$
\vartheta(5,1)=(34,4).
$$
Hence, $$\mathcal
C_{100}=\{(4,3,-7),\,(-1,-5,6),\,(7,-3,-4),\,(-6,5,1)\}.$$
Let $\lambda$ be the $3$-core partition with characteristic vector
$(4,3,-7)$. Using~\cite[Lemma 3.19]{BrNa2}, we obtain 
$$\mathfrak F(\lambda)=(18,15,12,9,6,3,0\mid 0,1,3,4,6,7,9),$$
and we deduce that $$\lambda=(10,9^2,8^2,7^2,6^2,5^2,4^2,3^2,2^2,1^2).$$

    Similarly, we find that the $3$-core partitions of $100$ with
characteristic vectors $(-1,-5,6)$, $(7,-3,-4)$ and $(-6,5,1)$ are
respectively 
$$(18,16,14,12,10,8,6,4,3^2,2^2,1^2),\quad
(19, 17, 15, 13, 11, 9, 7, 5, 3, 1 )\quad \text{and}$$
$$(14, 12, 10, 8, 7^2, 6^2, 5^2, 4^2, 3^2, 2^2, 1^2).$$
\end{example}

\section{Applications}
\label{sec:applications}

\subsection{Generalization of Hirschhorn-Sellers formula}
Let $n$ be a positive integer. Let $k$ be a positive integer not divisible
by $3$. Write $m$ be for the product of prime factors of $k$ with residue
$2$ modulo $3$. Assume $m$ is a square. Then 

\begin{equation}
\label{eq:nombre}
a_3\left(kn+\frac{k-1}3\right)=a_3(n)\prod_{p\in\Pi_1}\frac{\nu_{p}(k)+\nu_p(3n+1)
+ 1} {\nu_p(3n+1)+1},
\end{equation}
where $a_3(n)=|\mathcal E_n|$.

Indeed, we first observe that 
$m\equiv 1\mod 3$ since $m$ is a square, and it follows that
$k\equiv 1\mod 3$. Set $N=kn+\frac 1 3(k-1)$. We have
$$
3N+1=k(3n+1)=\prod_{p\in \Pi_1}p^{\nu_p(k)+\nu_p(3n+1)}m'm,
$$
where $m'$ is the product (with multiplicity) of the prime factors
with residue $2$ modulo $3$ of $3n+1$. By assumption, $mm'$ is a square if
and only if $m'$ is.
Hence, Theorem~\ref{thm:eisenstein} implies that $a_3(n)=0$ if
and only if $a_3(N)=0$. Assume $a_3(n)\neq 0$. Now, again 
by Theorem~\ref{thm:eisenstein}, we obtain
$$a_3(N)=\prod_{p\in\Pi_1}(\nu_p(k)+\nu_p(3n+1)+1)
=\prod_{p\in\Pi_1}\frac{\nu_p(k)+\nu_p(3n+1)+1}{\nu_p(3n+1)+1}a_3(n),$$
as required.

\begin{remark}
\noindent
\begin{enumerate}[(i)]
\item When $k$ is a square not divisible by $3$ and with no prime factors with
residue $1$ modulo $3$, then
$$a_3\left(kn+\frac{k-1}3\right)=a_3(n).$$
This generalizes~\cite[Corollary 8]{HirschhornSellers3cores}
and~\cite[Theorem 5]{BaruahNath}.
\item Assume $k$ is not divisible by $3$ and has no prime factors
congruent to $1$ modulo $3$. In~\cite[Theorem 5.1]{BrNa2}, we prove the
injectivity of the map 
$$\mathcal C_n\longrightarrow \mathcal C_{k^2n+\frac 1 3(k^2-1)},\
(x,y)\mapsto (\varepsilon kx,\varepsilon(ky+q)),$$
where $\varepsilon\in\{-1,1\}$ and $q\in \N$ are such that
$k=3q+\varepsilon$. By (i), this map is bijective.
\end{enumerate}
\end{remark}

\subsection{An amazing equality} 

    In this part, we derive~(\ref{eq:amazing}) from
Theorem~\ref{thm:eisenstein}. Then using the crank $\mathfrak c$ of
\S\ref{sec:part2} and the map $\vartheta$ of Lemma~\ref{lemma:injection},
we construct an explicit bijection that explains this equality. 

    Let $n$ and $m$ be two positive integers such that $3n+1$ and $3m+1$
are coprime. Then the prime decomposition of $(3n+1)(3m+1)$ is the
``concatenation'' of the one of $3n+1$ with the one of $3m+1$.  Since
$(3n+1)(3m+1)=3(3mn+m+n)+1$, Theorem~\ref{thm:eisenstein} gives
$$a_3(3mn+m+n)=a_3(n)a_3(m).$$
Assume $m=n+3^k$ for some $k\in\N$. If $d$ divides $3n+1$ and $3m+1$, then
$3n+1\equiv 0\equiv 3m+1\mod d$. Hence, $n\equiv m \mod d$ since $d$ and
$3$ are coprime, and $3^k\equiv 0\mod d$. Using again that $d$ and $3$ are
coprime, we deduce that $d=1$. The integers $3n+1$ and $3m+1$ are then
coprime, and Equality~\ref{eq:amazing} holds.

    Now, we assume $a_3(n)$ and $a_3(m)$ are non-zero. For any positive
integer $t$, and any $\underline j\in\mathcal V_t$, we write
$x_{\underline j}^{(t)}$ and $y_{\underline j}^{(t)}$ for the elements
defined in~(\ref{eq:defxj}) and in~(\ref{eq:defqnyj}). Set
$N=3n^2+(3^{k+1}+2)n+3^k$. Since $3n+1$ and $3m+1$ are coprime, we obtain
$$\mathcal V_N = \mathcal V_n \times \mathcal V_m,\quad
x_{\underline j}^{(N)}=x_{\underline j_1}^{(n)}y_{\underline
j_2}^{(m)}\quad\text{and}\quad q_N=q_nq_m,$$
where $\underline j=(\underline j_1,\underline j_2)$. Hence $y_{\underline
j}^{(N)}=y_{\underline j_1}^{(n)} y_{\underline j_2}^{(m)}$, and
any factorizations of $3N+1=z\overline z$ satisfies $z=z_nz_m$ 
$z_n$ and $z_m$ are equal up to a unit of $\Z[\mathbf j]$ to $y_{\underline
j_1}^{(n)}$ and $y_{\underline j_2}^{(m)}$, respectively. However, by
Theorem~\ref{thm:eisenstein}, these elements are equal (up to a unit
element) to $\rho\circ \vartheta (x,y)$ and
$\rho\circ\vartheta (x',y')$ for some $(x,y)\in\mathcal C_n$ and
$(x',y')\in \mathcal C_m$, where $\rho$ is the map defined
in~(\ref{eq:defrho}). Furthermore,
$$\rho\circ \vartheta (x,y)\rho\circ\vartheta (x',y')=(X,Y),$$
where $X=\frac{1}{2}(-1+3x-3y+3x'-3y'+18xx'+9xy'+9yx'-9yy')$ and 
$Y=\frac{1}2(9yy'+1+9yx'+3y'+3x+9xy'+3x'+3y)$. We remark that 
$$\mathfrak c(2X,2Y)=(\overline 2,\overline 1).$$
By the graph page~\pageref{graphe}, we deduce that the element of
$\mathcal U_{12N+4}$ which lies in the image of $\vartheta$ and on the
$\langle
R\rangle$-orbit of $(2X,2Y)$ is  $R^{-1}(2X,2Y) =(6a+3b+1,3b+1)$,
where 
$$ a = x + x' + 3 x x' + 3 x y' + 3 y x'\quad\text{and}\quad
b = y + y' - 3 x x' + 3 y y'.$$
This proves that the map $\alpha:\mathcal C_n\times \mathcal
C_{n+3^k}\longrightarrow \mathcal C_{3n^2+(3^{k+1}+2)n+3^k)}$ is defined for
any $(x,y)\in\mathcal C_n$ and any $(x',y')\in\mathcal C_{n+3^k}$ by
$$\alpha((x,y),(x',y'))=
(x+x'+3xx'+3xy'+3yx',y+y'-3xx'+3yy')$$
is a bijection.

\subsection{Remarks on a question of Han}
    In~\cite[5.2]{Han}, Han conjectured a criteria characterizing integers
with no $3$-cores. We now discuss this question.

\begin{enumerate}[(i)]
\item First, we remark that if $N=4^mn+\frac 1 3 (10\cdot 4^{m-1}-1)$ for
some integers $n\geq 0$ and $m\geq 1$, then 
$$3N+1=4^{m-1}(12n+10)=2^{2m-1}(6n+5).$$
However, $6n+5$ is odd, hence $\nu_2(3N+1)=2m-1$ is odd. Since
$2\in\Pi_2$, the first part of Theorem~\ref{thm:eisenstein} gives
$a_3(N)=0$. This proves the point (i) of~\cite[5.2]{Han}.
\item Now, we focus on the point (ii) of~\cite[5.2]{Han}, asserting
that if there are integers $n\geq 0$, $m\geq 1$, $k\geq 1$ with
$m\not\equiv 2k-1 \mod (6k-1)$ such that 
\begin{equation}
\label{eq:defN}
N=(6k-1)^2n+(6k-1)m+4k-1,
\end{equation}
then $a_3(N)=0$. Consider the case $N=58$. We have $a_3(58)=2$. We can see
that by noting that $3N+1=5^2\cdot 7$ and by applying
Theorem~\ref{thm:eisenstein}. However,
$$58=(6k-1)m+4k-1$$ 
for $m=1$ and $k=6$. Since $m\not\equiv 2k-1\mod 6k-1$, $58$ satisfies the
previous assumptions, but $a_3(58)\neq 0$, giving a counterexample of the
statement.

Note that
\begin{align*}
3N+1&=3(6k-1)^2n+3(6k-1)m+12k-2\\
&=(6k-1)(3n(6k-1)+3m+2).
\end{align*}

If we additionally assume that $\nu_p(6k-1)$ is odd whenever the prime $p$
has residue $2$ modulo $3$, then, for such a $p$, we have $6k-1\equiv
0\mod p$, and $2k-1\equiv -4k\mod p$. Suppose $p$ divides
$(3n(6k-1)+3m+2)$. Then $3m+2\equiv 0\mod p$, and $3m\equiv -2\mod p$.
Multiplying by $2k$, we obtain 
$$m\equiv -4k\equiv 2k-1\mod p,$$ 
which is a contradiction. Hence, $\nu_p(3N+1)$ is odd and $p\in\Pi_2$.
Theorem~\ref{thm:eisenstein} again gives $a_3(N)=0$.
\item Let $N\in\N$ be such that $a_3(N)=0$. By
Theorem~\ref{thm:eisenstein}, there is $p\in\Pi_2$ such that $\nu_p(3N+1)$
is odd. In fact, we can assume that $p\neq 2$, because there are at least
two distinct prime numbers with this property since $3N+1\equiv 1\mod 3$.
Thus, $p^{\nu_p(3N+1)}\equiv -1\mod 6$, and $p^{\nu_p(3N+1)}\geq 5$. It
follows that there exists $k\geq 1$ such that $p^{\nu_p(3N+1)}=6k-1$. Let
now $q\in\N$ be such that $3N+1=p^{\nu_p(3N+1)}q$. Then $q$ is not
divisible by $p$. Note that we must have $q\equiv 2\mod 3$ since
$3N+1\equiv 1\mod 3$. Assume $q>2$. Then $q\geq 5$, and there is $m\geq 1$
such that $q=3m+2$. Furthermore, $q\not\equiv 0\mod p$. The same
computation as above gives that $m\not\equiv 2k-1 \mod p$, and
$$N=\frac{(6k-1)(3m+2)-1}3=(6k-1)m+4k-1.$$
Thus $N$ is as in (ii) with $n=0$. Finally, assume that
$q=2$. We have $3N+1=2(6k-1)=2(6(k-1)+5)=4\cdot 3(k-1)+10$. Hence,
$$N=4(k-1)+\frac 1 3 (10\cdot 4^0-1),$$
and $N$ is as in (i) for $n=k-1$ and $m=1$.

Hence, the point (iii) of \cite[5.2]{Han} holds if we use the new assumptions
introduced in (ii).
\end{enumerate}
\bigskip

{\bf Acknowledgements.} The authors wish to thank the referee for their
constructive remarks and suggestions that improved the organization and
emphasis of the paper.

\bibliographystyle{abbrv}
\bibliography{references_19_1}

\end{document}